
\documentclass[letterpaper, 10 pt, conference, onecolumn]{ieeeconf}  %

\usepackage{graphicx} %
\usepackage{amsmath} %
\usepackage{amssymb}  %
\usepackage{cleveref}
\usepackage{xcolor}
\usepackage{algorithm} %
\usepackage{algpseudocode} %
\usepackage{cite}

\newtheorem{remark}{Remark}

\newtheorem{assumption}{Assumption}

\newtheorem{lemma}{Lemma}
\newtheorem{theorem}{Theorem}

\definecolor{verde}{RGB}{0,170,0}

\newcommand{\du}{^}
\newcommand{\ud}{_}

\newcommand{\oprocend}{\hfill\hbox{\rule{1.25ex}{1.25ex}}}

\newcommand{\domX}{\mathbf{X}}
\newcommand{\domXi}{\boldsymbol{\Xi}}
\newcommand{\domU}{\mathbf{U}}
\newcommand{\domZ}{\mathbf{Z}}

\newcommand{\cG}{\mathcal{G}}

\newcommand{\cL}{\mathcal{L}}

\newcommand{\cS}{\mathcal{S}}

\newcommand{\R}{\mathbb{R}}
\newcommand{\N}{\mathbb{N}}

\newcommand{\norm}[1]{\left\| #1 \right\|}
\newcommand{\snorm}[1]{\| #1 \|}

\newcommand{\iter}{{t}}
\newcommand{\iterp}{{\iter+1}}

\newcommand{\initer}{{\tau}}
\newcommand{\initerp}{{\initer +1}}

\newcommand{\x}{x}
\newcommand{\z}{z}
\newcommand{\uu}{u}

\newcommand{\pr}{\delta}
\newcommand{\prm}{\delta_{\text{MPC}}}

\newcommand{\xtp}{\x\ud{\iterp}}
\newcommand{\xt}{\x\ud{\iter}}

\newcommand{\xot}{\x\ud{1,\iter}}
\newcommand{\xtt}{\x\ud{2,\iter}}

\newcommand{\xtap}{\x\ud{\initerp}}
\newcommand{\xta}{\x\ud{\initer}} 

\newcommand{\xitp}{\xi\ud{\iterp}}
\newcommand{\xit}{\xi\ud{\iter}}

\newcommand{\ztp}{\z\ud{\iterp}}
\newcommand{\zt}{\z\ud{\iter}}

\newcommand{\slow}{f}
\newcommand{\fast}{g}
\newcommand{\mpc}{\mathcal{A}}

\newcommand{\ut}{\uu\ud{\iter}}

\newcommand{\uta}{\uu\ud{\initer}}

\newcommand{\cost}{\ell}
\newcommand{\costf}{\ell_{\hor}}

\newcommand{\hor}{T}

\newcommand{\red}{\slow_{R}}

\newcommand{\xieq}{\xi\ud{\text{eq}}}

\newcommand{\zstar}{\z\ud{\star}}

\newcommand{\scale}{1}

\newcommand{\lipeq}{L_{\xi}}

\newcommand{\txi}{\tilde{\xi}}
\newcommand{\txit}{\txi\ud\iter}
\newcommand{\txitp}{\txi\ud\iterp}

\newcommand{\tz}{\tilde{z}}
\newcommand{\tzt}{\tz\ud\iter}
\newcommand{\tztp}{\tz\ud\iterp}

\DeclareMathOperator{\col}{\mathrm{col}}
\DeclareMathOperator{\diag}{\mathrm{diag}}

\newcommand{\tfast}{\tilde{\fast}}
\newcommand{\tmpc}{\tilde{\mpc}}

\newcommand{\xstar}{\x\ud{\star}}

\newcommand{\Uextra}{\cG}

\def\algo/{{Suboptimal and Reduced-Order MPC}}
\def\algoCAPS/{{SUBOPTIMAL AND REDUCED-ORDER MPC}}

\IEEEoverridecommandlockouts                              %

\overrideIEEEmargins                                      %

\newtheorem{example}{Example}
\crefname{example}{example}{examples}
\Crefname{example}{Example}{Examples}

\title{\LARGE \bf Nonlinear MPC for Feedback-Interconnected Systems: \\a Suboptimal and Reduced-Order Model Approach}

\author{Stefano Di Gregorio, Guido Carnevale and Giuseppe Notarstefano%
\thanks{
    Work partially funded by FISA-2023-00210 project APACHE - CUP J53C25000520001, by the European Union - Next Generation EU - under the National Recovery and Resilience Plan (NRRP), Mission 4, Component 2, Investment 3.3. CUP J33C24001490009 and by IMA S.p.A.
}%
\thanks{The authors are with the Department of Electrical, Electronic and Information Engineering, University of Bologna, 40136, Bologna, Italy (e-mail: \{stefano.digregorio, guido.carnevale, giuseppe.notarstefano\}@unibo.it).}%
}

\begin{document}

\maketitle
\thispagestyle{empty}
\pagestyle{empty}

\begin{abstract}
   In this paper, we propose a suboptimal and reduced-order Model Predictive Control (MPC) architecture for discrete-time feedback-interconnected systems.
    The numerical MPC solver: (i) acts suboptimally, performing only a finite number of optimization iterations at each sampling instant, and (ii) relies only on a reduced-order model that neglects part of the system dynamics, either due to unmodeled effects or the presence of a low-level compensator.
   We prove that the closed-loop system resulting from the interconnection of the suboptimal and reduced-order MPC optimizer with the full-order plant has a globally exponentially stable equilibrium point. Specifically, we employ timescale separation arguments to characterize the interaction between the components of the feedback-interconnected system.
   The analysis relies on an appropriately tuned timescale parameter accounting for how fast the system dynamics are sampled.
   The theoretical results are validated through numerical simulations on a mechatronic system consisting of a pendulum actuated by a DC motor.
\end{abstract}

\section{INTRODUCTION}
\label{sec:introduction}

Model Predictive Control (MPC) is receiving increasing attention in both academia and industry as a powerful control technique for dynamical systems, see, e.g.,~\cite{bemporad2006model,kouvaritakis2016model,rawlings2020model} for a comprehensive overview of its theoretical and practical aspects.

The main drawback of MPC schemes lies in their computational complexity, which stems from the need to solve an optimal control problem online at each sampling instant.
Consequently, a central challenge in current MPC research is to mitigate this limitation.
In this context, we focus on two strategies that are widely adopted in both theoretical studies and practical implementations: (i) model order reduction, and (ii) suboptimal control approaches.
When dealing with complex systems characterized by hierarchical structures or multiple interconnected subsystems, the computational complexity of MPC can become prohibitive.
In such cases, reduced-order models can be employed to simplify the optimization problem while still capturing the essential dynamics of the system.
In this spirit, works~\cite{rosolia2020multi,csomay2022multi,rosolia2022unified} propose multi-rate control architectures, in which high-level routines based on MPC operate on reduced-order models and provide setpoints to low-level controllers acting on the full-order ones.
This paradigm, which is very common in robotic applications, has been formalized in the tutorial work~\cite{matni2024towards}.
In~\cite{loehning2014model}, the authors guarantee the stability of the full closed-loop system despite the use of a reduced model in the MPC optimizer.
A reduced-order MPC scheme is presented in~\cite{lorenzetti2019reduced}, where stability and constraint satisfaction are proven while explicitly accounting for model reduction errors, whose bounds can be computed, e.g., as in~\cite{lorenzetti2020error}.
In~\cite{kartmann2024certified}, Galerkin projection methods are employed to operate on a low-dimensional surrogate model.
The authors of~\cite{wang2022tube} present a robust MPC framework for two-time-scale linear systems, where only a reduced-order model neglecting the fast dynamics is used in the MPC optimizer.

Suboptimal MPC schemes have received considerable attention in the recent literature as a means to reduce the computational burden associated with standard MPC approaches.
The main idea behind suboptimal MPC is to iteratively compute an approximate solution to the optimal control problem, rather than solving it exactly at each sampling instant.
See, e.g.,~\cite{diehl2005real,gros2020linear} for an overview of suboptimal MPC schemes applied to nonlinear systems.
The stability properties of suboptimal MPC have been investigated in~\cite{scokaert2002suboptimal,graichen2010stability,rubagotti2014stabilizing}.
In~\cite{karapetyan2023finite}, the authors study the finite-time behavior of suboptimal linear MPC and provide insights into the trade-offs between computational efficiency and control performance.
This work has been extended in~\cite{karapetyan2025closed} to nonlinear systems.
In~\cite{zanelli2021lyapunov}, a suboptimal MPC scheme is proposed with theoretical guarantees on closed-loop stability obtained by deriving an upper bound on the sampling time.
More recently,~\cite{chen2025sampled} establishes the existence of a sampling-time bound under which the closed-loop system achieves exponential stability.

The main contribution of this paper lies in the design of an MPC architecture that combines suboptimal optimization and model reduction to reduce computational complexity.
Specifically, we focus on nonlinear systems characterized by the feedback interconnection between a main target dynamics, that we aim to optimally control, and a faster extra dynamics (e.g., unmodeled dynamics or a low-level compensator).
Inspired by~\cite{zanelli2021lyapunov} and~\cite{chen2025sampled}, we interpret the small parameter enabling the timescale separation as the sampling time of the original continuous-time dynamics.
Within this framework, we establish exponential stability of the equilibrium point of the closed-loop system resulting from the interconnection of the suboptimal and reduced-order MPC optimizer with the full-order plant.

The paper is organized as follows. 
In Section~\ref{sec:problem_formulation}, we present the problem setup.
In Section~\ref{sec:algo}, we describe the proposed suboptimal and reduced-order MPC algorithm and state the stability properties of the closed-loop system.
In Section~\ref{sec:theoretical_analysis}, we provide the theoretical analysis.
Finally, in Section~\ref{sec:simulations}, we validate the theoretical results through numerical simulations.

\paragraph*{Notation} 

The symbols $\R$ and $\N$ denote the set of real and natural numbers, respectively. 
The symbol $\R\ud{+}$ denotes the set of positive real numbers.
The symbol $I_n$ denotes the identity matrix of dimension $n$. 
The symbol $0_{n \times m}$ denotes the zero matrix of dimension $n \times m$. 
The symbol $\| \cdot \|$ denotes the Euclidean norm.
The symbol $\diag(d_1, \dots, d_N)$ denotes the block-diagonal matrix with blocks $d_1, \dots, d_N$ on the main diagonal.
The symbol $\col(v_1,\dots,v_N)$ denotes the vertical concatenation of the vectors $v_1,\dots,v_N$.

\section{SCENARIO DESCRIPTION}
\label{sec:problem_formulation}

In this section, we describe the class of systems we consider in this work.
In particular, we focus on discrete-time systems described by feedback-interconnected dynamics in the form
\begin{subequations}\label{eqp:plant}
    \begin{align}
        \xtp &= \slow(\xt, \xit, \ut, \pr)
        \label{eqp:slow_plant}
        \\
        \xitp &= \fast(\xit, \xt, \ut, \pr),
        \label{eq:fast_plant}
    \end{align}
\end{subequations}
where $\xt \in \domX \subseteq \R\du{n}$ is the target state, $\xit \in \domXi \subseteq \R\du{p}$ is the extra dynamics state, $\slow: \domX \times \domXi \times \domU \times \R\ud{+} \to \domX$ and $\fast: \domXi \times \domX \times \domU \times \R\ud{+} \to \domXi$ denote the target and extra dynamics, respectively, $\ut \in \domU \subseteq \R\du{m}$ is the control input, and $\pr > 0$ is a parameter that allows to arbitrarily modulate the relative speed between the target state $\xt$ and the extra dynamics state $\xit$.
We characterize this property as follows.
\begin{assumption}\label{ass:slowness}
    There exist $\bar{\pr}\ud{1}, L_{\slow} > 0$ such that, for all $\pr \in (0, \bar{\pr}\ud{1})$, it holds 
    \begin{align*}
        \norm{\slow(x, \xi, u, \pr) - \slow(x,\xi^\prime,u^\prime,\pr)} \!\leq\! \pr L_{\slow}\left(\norm{\xi - \xi^\prime} \!+\! \norm{u - u^\prime}\right),
    \end{align*}
    for all $x \in \domX$, $\xi, \xi^\prime \in \domXi$, and $u, u^\prime \in \domU$.
    Moreover, there exists $L_{\fast} > 0$ such that, for all $\pr \in (0, \bar{\pr}\ud{1})$, it holds
    \begin{align*}
        \norm{\fast(\xi, x, \uu, \pr) - \fast(\xi^\prime,x,\uu^\prime,\pr)} &\leq L_{\fast} \norm{\xi\!-\!\xi^\prime}
        + L_{\fast}\norm{u\!-\!u^\prime},
    \end{align*}
    for all $\xi, \xi^\prime \in \domXi$, $x \in \domX$, $u, u^\prime \in \domU$.
    \oprocend
\end{assumption}
The modeling framework in \eqref{eqp:plant} can arise, e.g., from a discretization of a continuous-time system in which a subsystem evolves faster than the other subsystem (thanks, e.g., to a proper design of a low-level controller). 
Further, as we formalize in the following assumption, we consider the case in which the extra dynamics~\eqref{eq:fast_plant} has equilibria parametrized in $(x,u)$ and that these equilibria are globally exponentially stable, uniformly in $(x,u)$.
\begin{assumption}\label{ass:ges_fast}
    There exists a $\lipeq$-Lipschitz continuous function $\xieq: \domX \times \domU \to \domXi$ such that
    \begin{align}\label{eq:xieq}
        \fast(\xieq(x, u), x, u, \pr) = \xieq(x, u),
    \end{align}
    for all $x \in \domX$, $u \in \domU$, and $\pr > 0$.
    Moreover, there exists a continuous function $\Uextra: \R^p \to \R$ with $a_4$-Lipschitz continuous gradient $\nabla \Uextra(\tilde{\xi})$ and $\bar{\pr}_2 > 0$, such that, for all $\pr \in (0, \bar{\pr}\ud{2})$, it holds
    \begin{subequations}\label{eq:U}
        \begin{align}
            &\!a_1\! \snorm{\xi \!-\! \xieq(x,u)}^2 \!\!\leq\! \Uextra(\xi \!-\! \xieq(x,u)\!) \!\leq\! a_2 \snorm{\xi \!-\! \xieq(x,u)}^2\!\!
            \label{eq:U_1}
            \\
            &\Uextra\big(\fast(\xi, x, u,\pr)  -  \xieq(x,u)  \big)  -  \Uextra(\xi - \xieq(x,u)) \leq  -  a_3 \snorm{\xi - \xieq(x,u)}^2, 
            \label{eq:U_2}
        \end{align}
    \end{subequations}
    for all $\xi \in  \domXi$, $x\in \domX$, $u \in \domU$ and some constants $a_1, a_2, a_3, a_4 >  0$.
    \oprocend
\end{assumption}
The practical intuition behind this assumption is that, in the ideal case in which $x$ and $u$ are fixed, the fast state $\xi$ converges rapidly to a corresponding steady-state configuration $\xieq(x, u)$.
We provide in the following a practical example satisfying Assumptions~\ref{ass:slowness} and~\ref{ass:ges_fast}.
\begin{example}\label{ex:actuated_pendulum}
    We consider a pendulum actuated by an electric motor whose continuous-time dynamics reads as
    \begin{subequations}\label{eq:pendulum_dynamics}
        \begin{align}
            \dot{\theta}(t) &= \omega(t)
            \\
            \dot{\omega}(t) &= -\frac{\beta}{J}\omega(t) - \frac{m g l}{J}\sin\big(\theta(t)\big) + \frac{1}{J}K_t I(t)
            \\
            L \dot{I}(t) &= V(t) - R I(t) - K_e \omega(t),
        \end{align}
    \end{subequations}
    where $\theta(t) \in \mathbb{S}^1$, $\omega(t) \in \R$, $I(t) \in \R$, and $V(t) \in \R$ are the pendulum angle, the angular velocity, the armature current, and the applied armature voltage at time $t \in \R$, respectively.
    As for the other elements in~\eqref{eq:pendulum_dynamics}, $m, l, J > 0$ are the pendulum mass, length and total inertia, $g, \beta > 0$ are the gravitational acceleration and viscous friction coefficients, while $R, L, K_e, K_t\! > \! 0$ are the motor resistance, armature inductance, back EMF constant, and torque constant, respectively.
    By introducing $\mathrm{x}(t) \! :=\! \col(\theta(t),\omega(t))\in\mathbb{S}^1 \! \times \! \R$, $\chi(t) \!:=\! I(t)\in\R$, $\mathrm{u}(t):=V(t)\in\R$, we rewrite~\eqref{eq:pendulum_dynamics} as
    \begin{subequations}\label{eq:pendulum_dynamics_2}
        \begin{align}
            \dot{\mathrm{x}}(t) &= 
            \begin{bmatrix}
                \mathrm{x}_2(t)
                \\
                -\tfrac{\beta}{J} \mathrm{x}_2(t) - \tfrac{m g l}{J}\sin(\mathrm{x}_1(t)) + \tfrac{K_t}{J}\chi(t)
            \end{bmatrix}
            \\
            \dot{\chi}(t) &= -\frac{1}{L}\left(R \chi(t) + K_e \mathrm{x}_2(t) - \mathrm{u}(t)\right).
        \end{align}
    \end{subequations}
    By adopting a Forward-Euler discretization with sampling time $\pr > 0$ and scaling the armature inductance as $L=\tilde{L}\pr$ with $\tilde{L}>0$, we get a discretized counterpart of~\eqref{eq:pendulum_dynamics_2} given by
    \begin{subequations}\label{eq:pendulum}
        \begin{align}
            \xtp &= \xt + \pr
            \begin{bmatrix}
                \xtt
                \\
                -\tfrac{\beta}{J} \xtt - \tfrac{m g l}{J}\sin(\xot) + \tfrac{K_t}{J}\xit
            \end{bmatrix}\label{eq:pendulum_slow}
            \\
            \xitp &= \left(1-\tfrac{R}{\tilde{L}}\right)\xit - \begin{bmatrix}0 & \tfrac{K_e}{\tilde{L}}\end{bmatrix}\xt + \tfrac{1}{\tilde{L}}\ut,
            \label{eq:pendulum_fast}
        \end{align}
    \end{subequations}
    where $\xt \in \mathbb{S}^1 \times \R$, $\xit \in \R$, and $\ut \in \R$ are the discrete-time counterparts at iteration $\iter$ of $\mathrm{x}(t)$, $\chi(t)$, and $\mathrm{u}(t)$, respectively.
    System~\eqref{eq:pendulum} is an instance of the generic system~\eqref{eqp:plant} in which the explicit expressions of $\slow$ and $\fast$ are given by
    \begin{subequations}\label{eq:slow_fast_pendulum}
        \begin{align}
            \slow(x,\xi,u,\pr) &= x + \pr
            \begin{bmatrix}
                \x_2
                \\
                -\tfrac{\beta}{J} \x_2 - \tfrac{m g l}{J}\sin(\x_1) + \tfrac{K_t}{J}\xi
            \end{bmatrix}
            \label{eq:slow_definition_pendulum}
            \\
            \fast(\xi,x,u) &= \left(1-\tfrac{R}{\tilde{L}}\right)\xi - \begin{bmatrix}0 & \tfrac{K_e}{\tilde{L}}\end{bmatrix}x + \tfrac{1}{\tilde{L}}u.
            \label{eq:fast_definition_pendulum}
        \end{align}
    \end{subequations}
    Indeed, by observing~\eqref{eq:slow_fast_pendulum}, we can see that system~\eqref{eq:pendulum} satisfies Assumption~\ref{ass:slowness} with $L_{\slow} = K_t/J$ and $L_{\fast} = \max\{|1-R/\tilde{L}|,K_e/\tilde{L},1/\tilde{L}\}$.
    Moreover, by observing~\eqref{eq:fast_definition_pendulum}, we also note that subsystem~\eqref{eq:pendulum_fast} admits an equilibrium function $\xieq: \mathbb{S}^1 \times \R \times \R \to \R$ defined as
    \begin{align}
        \xieq(x,u) = \frac{1}{R}\left(-\begin{bmatrix}0 & K_e\end{bmatrix}x + u\right),\label{eq:xieq_specific}
    \end{align}
    which satisfies Assumption~\ref{ass:ges_fast} with $\lipeq = \max\{1,K_e\}/R$.
    Further, for all $\tilde{L} > R/2$, the equilibrium $\xieq(x,u)$ is globally exponentially stable uniformly in $(x,u) \in \mathbb{S}^1 \times \R \times \R$ and, thus, Assumption~\ref{ass:ges_fast} is satisfied.
\end{example}

\begin{remark}
    We emphasize the dual role of the parameter $\pr$ in our framework. 
    In classical continuous-time singular perturbation theory (see, e.g.,~\cite{abdelgalil2023multi}), timescale separation is typically induced by a small physical parameter that accelerates the fast subsystem. 
    In the proposed discrete-time setting, this role is naturally played by the sampling time $\pr$. 
    Indeed, as illustrated by the discretization process in Example~\ref{ex:actuated_pendulum}, choosing a sufficiently small sampling time induces, from a mathematical viewpoint, a timescale separation between the fast extra dynamics and the slow target one, provided that the speed of the former can be arbitrarily tuned with respect to $\pr$. \oprocend
\end{remark}

\section{\algoCAPS/: STRATEGY DESCRIPTION AND \\STABILITY GUARANTEES}
\label{sec:algo}

In this section, we introduce the proposed \algo/ strategy and provide the corresponding stability guarantees for the resulting closed-loop system.

\subsection{MPC Strategy Description}

By exploiting the system structure outlined in Section~\ref{sec:problem_formulation}, we design an MPC strategy that operates on a reduced-order model of system~\eqref{eqp:plant}.
Specifically, we approximate the full dynamics by a reduced dynamics map $\red: \domX \times \domU \times \R_{+} \to \domX$
that captures the target subsystem $\slow$ evaluated on the extra dynamics manifold $\xi = \xieq(x, u)$, namely
\begin{align}\label{eq:red}
    \red(x, u, \pr) := \slow(x, \xieq(x, u), u, \pr).
\end{align}
The control input $\ut$ is iteratively computed through a suboptimal MPC scheme applied to the reduced system, thereby adhering to and further reinforcing the rationale of computational efficiency motivating the use of $\red$.
Specifically, given a time horizon $\hor \in \N$, at each iteration $\iter \in \N$, we consider the finite-horizon optimal control problem
\begin{align}\label{eq:MPC_problem}
    \begin{aligned}
        \min_{\{\uta\}_{\initer=0}^{\hor-1}} \quad & \sum_{\initer=0}^{\hor-1} \cost\left(\xta, \uta\right) + \costf\left(\x\ud{\hor}\right) 
        \\
        \text{s.t.} \quad
            & x\ud{0} = \xt
            \\%
            & \xtap = \red(\xta, \uta, \prm), \quad \initer = 0, \ldots, \hor-1
            \\
            & \uta \in \mathcal{U}, \quad \xtap \in \mathcal{X}, 
            \quad \initer = 0, \ldots, \hor-1 
            \\
            & x\ud{\hor} \in \mathcal{X}_f, %
    \end{aligned}
\end{align}
where $\cost: \domX \times \domU \to \R\ud{+}$ is the stage cost, $\costf: \domX \to \R\ud{+}$ is the terminal cost, $\mathcal{U} \subseteq \domU$ is the input constraint set, $\mathcal{X} \subseteq \domX$ is the state constraint set, $\mathcal{X}_f \subseteq \domX$ is the terminal constraint set, and $\prm > 0$ is the sampling time used by the optimizer.
Problem \eqref{eq:MPC_problem} is addressed suboptimally.
More in detail, we consider an optimizer dynamics of the form
\begin{subequations}\label{eq:mpc_dyn}
    \begin{align}
        \ztp &= \mpc(\zt, \xt) 
        \\
        \ut &= \Pi(\zt),
    \end{align}
\end{subequations}
where $\zt \in \domZ \subseteq \R^{Tm}$ is the optimizer state, $\mpc: \domZ \times \domX \to \domZ$ describes the optimization algorithm dynamics, and $\hor \in \N$ is the prediction horizon.
As customary in MPC schemes, only the first element of the $\zt$ sequence is applied to the plant as control input, hence the projection map $\Pi: \domZ \to \domU$ in \eqref{eq:mpc_dyn} reads as 
\begin{align}\label{eq:projection_map}
    \Pi(z) := \left[
        I_m \quad 0_{m \times (T-1)m}
    \right]z.
\end{align}
In the following, some assumptions on the optimization algorithm are provided. 
By denoting with $\zstar(\x)$ the unique solution to problem \eqref{eq:MPC_problem} at given initial conditions $\x$, we assume as follows that $\zstar(\x)$ is a globally exponentially stable equilibrium point of the optimizer dynamics~\eqref{eq:mpc_dyn}.

\begin{assumption}\label{ass:ges_MPC}
    For all $x \in \domX$, it holds 
    \begin{align}\label{eq:optimum_equilibrium}
        \zstar(x) = \mpc(\zstar(x), x).
    \end{align}
    Moreover, there exist a continuously differentiable function $\cL: \R^{\hor m} \to \R_+$ with $b_4$-Lipschitz continuous gradient $\nabla_z \cL(z)$ and constants $b_1, b_2, b_3, b_4>0$ such that
        \begin{subequations}\label{eq:cL}
                \begin{align}
                &b_1\norm{z \!\!-\! \zstar(x)}^2 \!\leq\! \cL(z \!\!-\! \zstar(x)) \leq b_2\norm{z \!\!-\! \zstar(x)}^2
                \label{eq:cL_1}
                \\
                &\cL(\mpc(z,x) \!\!-\! \zstar(x)) \!-\! \cL(z \!\!-\! \zstar(x)) \!\leq\! -b_3\norm{z \!\!-\! \zstar(x)}^2\!,\!
                \label{eq:cL_2}
            \end{align}
        \end{subequations}
        for all $\z \in \domZ$ and $\x \in \domX$.
    \oprocend
\end{assumption}
Furthermore, we assume that the optimal solution of~\eqref{eq:MPC_problem} yields an exponentially stable equilibrium $\xstar \in \domX$ for the reduced system.

\begin{assumption}
    \label{ass:reduced}
    There exist a continuously differentiable function $W: \domX \to \R$ with $c_4$-Lipschitz continuous gradient $\nabla_x W(x)$, constants $c_1, c_2, c_3 > 0$, and $\bar{\pr}_3 > 0$ such that, for all $\pr \in (0, \bar{\pr}_3)$, it holds
    \begin{subequations}\label{eq:W}
         \begin{align}
            &c_1 \norm{x - \xstar}^2 \leq W(x) \leq c_2 \norm{x - \xstar}^2
            \\
            &W(\red(x,\Pi(\zstar(x)),\pr)) \!-\! W(x) \!\leq\! - \pr c_3 \norm{x - \xstar}^2\!,\!
        \end{align}
    \end{subequations}
    for all $x \in \domX$.
    Moreover, for all $\pr \in (0,\bar{\pr}_3)$ and some $L_{\red} > 0$, it holds 
    \begin{align}
        \norm{\red(x,\Pi(\zstar(x)),\pr) - x} &\leq \pr L_{\red}\norm{x - \xstar},
    \end{align}
    for all $x \in \domX$.
    \oprocend
\end{assumption}
\begin{remark}
    Sufficient conditions on the underlying optimal control problem~\eqref{eq:MPC_problem} can be established to guarantee Assumption~\ref{ass:reduced}, namely, exponential stability of $\xstar$ for the reduced-order dynamics $\xtp = \red(\xt,\ut, \pr)$ under the optimal control law $\ut = \Pi(\zstar(\xt))$. 
    These conditions regard, e.g., appropriate choice of stage and terminal cost functions, terminal constraints, sufficiently long prediction horizon, and regularity conditions on the system dynamics, see, e.g.,~\cite[Chap.~2, pp.~139--141]{rawlings2020model}.
    \oprocend
\end{remark}

\subsection{Closed-Loop System: Stability Guarantees}
\label{sec:algorithm_presentation}

The closed-loop system is thus described by the interconnection of the plant~\eqref{eqp:plant} and the optimizer~\eqref{eq:mpc_dyn}, namely
\begin{subequations}\label{eq:interconnected_sys}
    \begin{align}
        \xtp &= \slow(\xt, \xit, \Pi(\zt), \pr)
        \label{eq:interconnected_system_x}
        \\
        \xitp &= \fast(\xit, \xt, \Pi(\zt),\pr)
        \label{eq:interconnected_system_xi}
        \\
        \ztp &= \mpc(\zt, \xt),
        \label{eq:interconnected_system_z}
    \end{align}
\end{subequations}
Fig.~\ref{fig:bd_pendulum} shows a schematic representation of the closed-loop system in the pendulum-motor setup in Example~\ref{ex:actuated_pendulum}.
\begin{figure}[H]
    \centering
    \includegraphics[scale=.74]{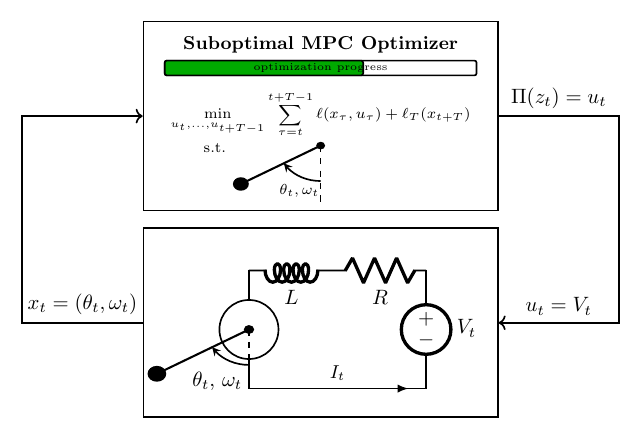}
    \caption{Block diagram representation of the closed-loop system resulting from the interconnection of the full plant dynamics~\eqref{eqp:plant} and the reduced-order suboptimal MPC~\eqref{eq:mpc_dyn} for the pendulum-motor setup (cf.~Example~\ref{ex:actuated_pendulum}).
    }
    \label{fig:bd_pendulum}
\end{figure}
The stability properties of the closed-loop system~\eqref{eq:interconnected_sys} are formalized as follows.
\begin{theorem}\label{th:main}
    Consider the closed-loop system~\eqref{eq:interconnected_sys} and let Assumptions~\ref{ass:slowness},~\ref{ass:ges_fast},~\ref{ass:ges_MPC}, and~\ref{ass:reduced} hold.
    Then, there exists $\bar{\pr} \in (0, \min\{\bar{\pr}_1, \bar{\pr}_2, \bar{\pr}_3 \})$ such that, for all $\pr \in (0, \bar{\pr})$, the point $(\xstar,\xieq(\xstar,\Pi(\zstar(\xstar))),\zstar(\xstar))$ is a globally exponentially stable equilibrium of system~\eqref{eq:interconnected_sys}. \oprocend
\end{theorem}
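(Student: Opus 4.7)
The approach is a discrete-time timescale-separation argument in which $\pr$ acts as the small parameter that renders $\slow$ a slow update (Lipschitz in $(\xi,u)$ with constant $\pr L_{\slow}$) while the $\xi$- and $z$-updates are fast. I would first pass to the error coordinates
\[
    \tilde{x}^t := x^t - \xstar,
    \qquad
    \tilde{\xi}^t := \xi^t - \xieq(x^t,\Pi(z^t)),
    \qquad
    \tilde{z}^t := z^t - \zstar(x^t),
\]
so that the claimed equilibrium becomes the origin, and then construct a composite Lyapunov function $V := W(x) + \alpha\, U(\tilde{\xi}) + \beta\, \cL(z)$ with $\alpha,\beta>0$ to be tuned. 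Assumptions~\ref{ass:ges_fast},~\ref{ass:ges_MPC}, and~\ref{ass:reduced} sandwich $V$ between positive multiples of $\snorm{\tilde x}^2 + \snorm{\tilde\xi}^2 + \snorm{\tilde z}^2$, so the task reduces to establishing a strict one-step decrease of $V$.

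I would bound the three incremental terms as follows. For $W$, I apply Assumption~\ref{ass:reduced} along the nominal reduced step $\red(x,\Pi(\zstar(x)),\pr)$ and then use Assumption~\ref{ass:slowness} with Lipschitzness of $\xieq$ to control the perturbation from the actual $\slow(x,\xi,\Pi(z),\pr)$, obtaining a decrease of order $-\pr c_3\snorm{\tilde x}^2$ modulo cross products of order $\pr\,\snorm{\tilde x}(\snorm{\tilde\xi}+\snorm{\tilde z}) + \pr^2$. For $U$, I apply the contraction~\eqref{eq:U_2} to the nominal $\fast$-step and then use~\eqref{eq:U_3} together with the Lipschitzness of $\xieq$ to account for the moving setpoint $\xieq(x^{t+1},\Pi(z^{t+1})) - \xieq(x^t,\Pi(z^t))$; here $\snorm{x^{t+1}-x^t}$ is $O(\pr)$, but $\snorm{z^{t+1}-z^t}$ is only bounded by $(L_{\mpc}+1)\snorm{\tilde z^t}$ via Lipschitzness of $\mpc$ and the fixed-point property~\eqref{eq:optimum_equilibrium}, producing residual cross terms of order $\snorm{\tilde z}\snorm{\tilde\xi}$ and $\snorm{\tilde z}^2$ that are \emph{not} multiplied by $\pr$. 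For $\cL$, I apply~\eqref{eq:cL_2} at fixed $x^t$ and then bound the shift induced by $\zstar(x^{t+1})-\zstar(x^t) = O(\pr)$ via the $L_{\zstar}$-Lipschitz property, obtaining a decrease of order $-b_3\snorm{\tilde z}^2$ plus cross terms of order $\pr\,\snorm{\tilde z}(\snorm{\tilde x}+\snorm{\tilde\xi}+\snorm{\tilde z})$.

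The final step is to apply Young's inequality to every cross product and choose $\alpha,\beta$. The binding constraint is that the non-$\pr$-small residuals in $\Delta U$ involving $\snorm{\tilde z}^2$ and $\snorm{\tilde z}\snorm{\tilde\xi}$ must be absorbed by $-\beta b_3\snorm{\tilde z}^2$ and $-\alpha a_3\snorm{\tilde\xi}^2$, which forces $\beta$ to be chosen sufficiently large relative to $\alpha$. After fixing $\alpha$ and then $\beta$, all surviving cross terms carry a factor of $\pr$ and can be dominated by the negative part $-\pr c_3\snorm{\tilde x}^2 - \alpha a_3\snorm{\tilde\xi}^2/2 - \beta b_3\snorm{\tilde z}^2/2$ whenever $\pr$ is small enough, yielding $\Delta V \le -\pr c\,(\snorm{\tilde x}^2 + \snorm{\tilde\xi}^2 + \snorm{\tilde z}^2)$ for some $c>0$. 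Combined with the quadratic sandwich on $V$, this implies global exponential stability of the origin in error coordinates, proving the claim.

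The main obstacle is precisely the cross-coupling between the two \emph{fast} subsystems: the error $\tilde\xi$ is measured relative to a setpoint $\xieq(x,\Pi(z))$ that shifts by $O(\snorm{\tilde z})$ at every step, regardless of how small $\pr$ is, so $\Delta U$ inevitably carries residuals of order $\snorm{\tilde z}^2$ that cannot be made $\pr$-small. Handling this requires a careful weight ordering $\beta\gg\alpha$ so that $\cL$ provides enough $-\snorm{\tilde z}^2$ budget to cover both its own decrease and the contamination from $U$, and only afterwards can the remaining $\pr$-order slow/fast couplings be collapsed. This nested choice of weights is also what ultimately pins down the admissible upper bound $\bar\pr$.
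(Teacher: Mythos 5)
Your proposal is correct and follows essentially the same route as the paper: the same error coordinates, a weighted composite Lyapunov function built from $W$, $U$, and $\cL$, with the weight on $\cL$ chosen large to absorb the non-$\pr$-small drift of $\xieq(x,\Pi(z))$ induced by $\tilde z$ (this is exactly the role of $\kappa$ in the paper's Lemma~\ref{lemma:bl}), and $\pr$ taken small to dominate the remaining $O(\pr)$ slow--fast couplings. The only difference is organizational: the paper first merges $U+\kappa\cL$ into a boundary-layer Lyapunov function and then invokes the generic two-time-scale result (Theorem~\ref{th:theorem_generic}) for $V=W+\cU$, whereas you assemble the three terms in one shot with weights $\alpha,\beta$ --- the estimates, the weight ordering, and the resulting bound $\Delta V\leq -\pr c(\snorm{\tilde x}^2+\snorm{\tilde\xi}^2+\snorm{\tilde z}^2)$ coincide with the paper's.
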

The proof of Theorem~\ref{th:main} will be provided in a forthcoming document. A sketch of the proof is provided in Section \ref{sec:proof}.

\section{THEORETICAL ANALYSIS}
\label{sec:theoretical_analysis}

In this section, we analyze the closed-loop system arising from the interconnection of the plant dynamics~\eqref{eqp:plant} and the optimizer one~\eqref{eq:mpc_dyn} to prove Theorem~\ref{th:main}.
Specifically, we interpret this closed-loop system as a two-time-scale system in which the fast subsystem includes both the plant extra dynamics~\eqref{eq:fast_plant} and the optimizer dynamics~\eqref{eq:mpc_dyn}, while the slow part includes only the plant target dynamics~\eqref{eqp:slow_plant}.
This feedback interconnection is graphically depicted in Fig.~\ref{fig:bd_interconnection}. 
\begin{figure}[H]
  \centering
  \includegraphics[scale=\scale]{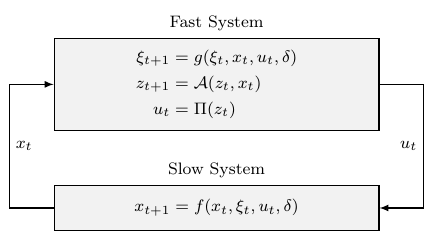}
  \caption{Block diagram representation of the interconnected system.}
  \label{fig:bd_interconnection}
\end{figure}
Hence, as customary in two-time-scale systems analysis, we study (i) the fast subsystem in the ideal case in which the slow state $\xt$ is fixed  (cf. Section~\ref{sec:bl}) and, conversely, (ii) the slow one in the ideal case in which the fast state $(\xit, \zt)$ lies in its equilibrium manifold (cf. Section~\ref{sec:reduced}).
Finally, in Section~\ref{sec:proof} we combine the results obtained in this preparatory phase to prove Theorem~\ref{th:main}.

\subsection{Boundary-Layer System Analysis}
\label{sec:bl}

Here, we focus on the boundary-layer system, i.e., the subsystem obtained by considering an arbitrarily fixed slow state $\xt = x$ for all $\iter \in \N$ in the fast subsystem~\eqref{eq:interconnected_system_xi}-\eqref{eq:interconnected_system_z}.
A graphical representation of this system is shown in Fig.~\ref{fig:bd_boundary_layer}.
\begin{figure}[H]
  \centering
  \includegraphics[scale=\scale]{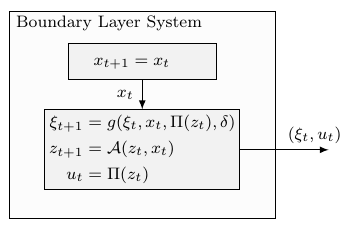}
  \caption{Block diagram representation of the boundary layer system.}
  \label{fig:bd_boundary_layer}
\end{figure}
Hence, by observing~\eqref{eq:interconnected_system_xi}-\eqref{eq:interconnected_system_z} and introducing the error coordinates $\col(\txit,\tzt) := \col(\xit - \xieq(x, \Pi(\zt)), \zt - \zstar(x))$, the boundary-layer system reads as
\begin{subequations}\label{eq:BL_not_compact}
    \begin{align}
        \txitp &= \fast(\txit + \xieq(x, \Pi(\tzt + \zstar(x))), x, \Pi(\tzt + \zstar(x)),\pr)  - \xieq(x, \Pi(\mpc(\tzt + \zstar(x), x)))
        \\
        \tztp &= \mpc(\tzt + \zstar(x), x) - \zstar(x).
    \end{align}
\end{subequations}
For the sake of compactness, we define the functions
\begin{subequations}\label{eq:tilde_definitions}
    \begin{align}
        \tfast(\txi, x, \tz, \pr)
        &:= \fast(\txi + \xieq(x, \Pi(\tz + \zstar(x))), x, \Pi(\tz + \zstar(x)),\pr)
         - \xieq(x, \Pi(\tz + \zstar(x)))
        \label{eq:tfast}
        \\
        \tmpc(\tz, x) &:= \mpc(\tz + \zstar(x), x) - \zstar(x)
        \\
        \Delta \xieq(x, \tz,\tz^\prime) &:= \xieq(x, \Pi(\tz + \zstar(x))) 
        - \xieq(x, \Pi(\tz^\prime + \zstar(x))),\label{eq:delta_xieq}
    \end{align}
\end{subequations}
which allow us to compactly rewrite~\eqref{eq:BL_not_compact} as
\begin{subequations}\label{eq:BL}
    \begin{align}
        \txitp &= \tfast(\txit, x, \tzt,\pr) \! + \! \Delta\xieq(x, \tztp,\tzt)
        \\
        \tztp &= \tmpc(\tzt, x).
    \end{align}
\end{subequations}
The following lemma ensures global exponential stability of the origin for the boundary-layer system~\eqref{eq:BL}.
To properly state this result, let us introduce the set $\cS$ defined as
\begin{align*} 
    \cS := &\{(\txi, x, \tz) \in \R^{p} \times \domX \times \R^{\hor m} \mid 
   \tilde{\xi} + \xieq(x, \Pi(\tilde{z}  + \zstar(x))) \in \domXi, \tilde{\z} + \zstar(x) \in \domZ\}.
\end{align*}

\begin{lemma}\label{lemma:bl}
    There exists a continuous function $U: \R^p \times \R^{\hor m} \to \R$ such that, for all $\pr \in (0, \min \{\bar{\pr}_1, \bar{\pr}_2 \})$, it holds
    \begin{subequations}\label{eq:cU}
        \begin{align}
        d_1 (\snorm{\txi}^2 + \snorm{\tz}^2) &\leq U(\txi, \tz) \leq d_2 (\snorm{\txi}^2 + \snorm{\tz}^2)
        \label{eq:cU_1}
        \\
        U\big(\txi\ud{+},\tz\ud{+}\big) - U(\txi,\tz) &\leq -d_3 (\snorm{\txi}^2 + \snorm{\tz}^2)\label{eq:cU_2}
        \\
        |U(\txi, \tz) - U(\txi^\prime, \tz^\prime)| &\leq d_4 \norm{\col(\txi, \tz) - \col(\txi^\prime, \tz^\prime)} 
        \label{eq:cU_3}(\snorm{\col(\txi,\tz)} + \snorm{\col(\txi^\prime,\tz^\prime)}),
        \end{align}
    \end{subequations}
    for all $(\txi, x, \tz) \in \cS$, and some constants $d_1, d_2, d_3, d_4 > 0$, where $\txi\ud{+} := \tfast(\txi, x,\tz,\pr) + \Delta\xieq(x, \tz,\tmpc(\tz, x))$ and $\tz\ud{+} := \tmpc(\tz, x)$.
    \oprocend
\end{lemma}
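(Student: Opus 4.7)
The plan is to build a composite Lyapunov function by summing $U$ from Assumption~\ref{ass:ges_fast} and a scaled copy of $\cL$ from Assumption~\ref{ass:ges_MPC}, both re-expressed in the error coordinates $(\txi,\tz)$. Specifically, I would take
\[
\cU(\txi,\tz) := U(\txi) + \alpha\,\cL(\tz + \zstar(x)),
\]
with $\alpha>0$ a free parameter to be fixed later; the $x$-dependence on the right-hand side is treated implicitly, exactly as the bounds of Assumption~\ref{ass:ges_MPC} depend implicitly on $x$ through $\zstar(x)$. With this choice, the quadratic sandwich~\eqref{eq:cU_1} is immediate from~\eqref{eq:U_1} and~\eqref{eq:cL_1} by setting $d_1 := \min\{a_1,\alpha b_1\}$ and $d_2 := \max\{a_2,\alpha b_2\}$, since $\|(\tz+\zstar(x))-\zstar(x)\|=\|\tz\|$. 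Similarly,~\eqref{eq:cU_3} follows by combining~\eqref{eq:U_3} with~\eqref{eq:cL_3} and using $\|\col(\txi,\tz)\|\ge\max\{\|\txi\|,\|\tz\|\}$ to aggregate the two norm contributions.

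The core of the argument is the one-step decrease~\eqref{eq:cU_2}. For the $\cL$-component, applying~\eqref{eq:cL_2} at $z:=\tz+\zstar(x)$ yields $\alpha[\cL(\tz\ud{+}+\zstar(x))-\cL(\tz+\zstar(x))] \leq -\alpha b_3\|\tz\|^2$, since $\tz\ud{+}+\zstar(x)=\mpc(\tz+\zstar(x),x)$. For the $U$-component I would insert the intermediate point $\txi^{\mathrm{aux}} := \tfast(\txi,x,\Pi(\tz+\zstar(x)),\pr)$ and split
\[
U(\txi\ud{+}) - U(\txi) = \bigl[U(\txi^{\mathrm{aux}}) - U(\txi)\bigr] + \bigl[U(\txi\ud{+}) - U(\txi^{\mathrm{aux}})\bigr].
\]
The first bracket is bounded by $-a_3\|\txi\|^2$ via~\eqref{eq:U_2}, since by~\eqref{eq:tfast} $\txi^{\mathrm{aux}}$ is precisely the error-coordinate form appearing there. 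The second bracket, whose displacement equals $\txi\ud{+}-\txi^{\mathrm{aux}}=\Delta\xieq(x,\tz,\tmpc(\tz,x))$, is controlled through~\eqref{eq:U_3}.

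The key estimate on the perturbation exploits $\lipeq$-Lipschitz continuity of $\xieq$, nonexpansiveness of the projection $\Pi$, and $L_{\mpc}$-Lipschitz continuity of $\mpc$ together with $\tmpc(0,x)=0$ (which follows from~\eqref{eq:optimum_equilibrium}) to give $\|\Delta\xieq(x,\tz,\tmpc(\tz,x))\| \leq \lipeq(1+L_{\mpc})\|\tz\|$. Combined with $\|\txi^{\mathrm{aux}}\| \leq \sqrt{(a_2-a_3)/a_1}\,\|\txi\|$, obtained by chaining~\eqref{eq:U_1}--\eqref{eq:U_2}, an application of~\eqref{eq:U_3} produces an overall estimate
\[
U(\txi\ud{+}) - U(\txi) \leq -a_3\|\txi\|^2 + C_1\|\txi\|\|\tz\| + C_2\|\tz\|^2,
\]
for explicit constants $C_1,C_2>0$ independent of $x$ and of $\pr\in(0,\min\{\bar{\pr}_1,\bar{\pr}_2\})$. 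A Young inequality $C_1\|\txi\|\|\tz\| \leq \tfrac{a_3}{2}\|\txi\|^2 + \tfrac{C_1^2}{2a_3}\|\tz\|^2$ absorbs the cross term, and adding the $-\alpha b_3\|\tz\|^2$ contribution from $\cL$ gives
\[
\cU(\txi\ud{+},\tz\ud{+}) - \cU(\txi,\tz) \leq -\tfrac{a_3}{2}\|\txi\|^2 - (\alpha b_3 - C_3)\|\tz\|^2,
\]
with $C_3 := C_2+C_1^2/(2a_3)$. Choosing $\alpha > C_3/b_3$ closes the argument with $d_3 := \min\{a_3/2,\,\alpha b_3 - C_3\}$.

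The main obstacle I anticipate is the careful tracking of the perturbation $\Delta\xieq$ so that it produces only $\|\tz\|$-terms (with no isolated $\|\txi\|$ factor) on the right-hand side of the $U$-decrease: this is precisely the shape needed in order to dominate the residual by a sufficiently large $\alpha$ multiplying the $\cL$-decrease. The rest is routine two-time-scale Lyapunov bookkeeping on top of Assumptions~\ref{ass:slowness}--\ref{ass:ges_MPC}.
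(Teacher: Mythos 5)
Your proposal is correct and follows essentially the same route as the paper: a composite Lyapunov function $U(\txi)+\kappa\,\cL(\cdot)$, the split of the $U$-increment through the nominal update $\tfast(\txi,x,\Pi(\tz+\zstar(x)),\pr)$, the drift bound $\norm{\Delta\xieq}\leq \lipeq(1+L_{\mpc})\norm{\tz}$ via Lipschitz continuity of $\xieq$, $\Pi$, and $\mpc$ together with $\tmpc(0,x)=0$, and finally a sufficiently large weight on the optimizer term. The only differences are cosmetic: you bound the nominal term by chaining \eqref{eq:U_1}--\eqref{eq:U_2} instead of using Lipschitz continuity of $\tfast$, and you close with Young's inequality instead of the Sylvester criterion on the $2\times 2$ matrix, both of which yield the same conclusion.
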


The proof of Lemma~\ref{lemma:bl} will be provided in a forthcoming document. %

\subsection{Reduced System Analysis}
\label{sec:reduced}

As graphically depicted in Fig.~\ref{fig:bd_reduced_system}, the reduced system corresponds to the slow subsystem~\eqref{eq:interconnected_system_x} studied by considering the fast state $(\xit, \zt)$ in its equilibrium manifold, i.e., $\xit = \xieq(\xt, \Pi(\zt))$ and $\zt = \zstar(\xt)$ for all $\iter \in \N$.
\begin{figure}[H]
  \centering
  \includegraphics[scale=\scale]{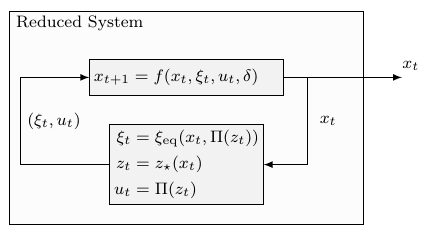}
  \caption{Block diagram representation of the reduced system.}
  \label{fig:bd_reduced_system}
\end{figure}
Then, by definition of $\red$ in \eqref{eq:red}, the reduced system reads
\begin{align}\label{eq:reduced_system}
    \xtp = \red(\xt, \Pi(\zstar(\xt)), \pr).
\end{align}
Therefore, the reduced system convergence properties directly follow from Assumption~\ref{ass:reduced}.

\subsection{Sketch of proof of Theorem~\ref{th:main}}
\label{sec:proof}

    The proof relies on timescale-separation arguments applied to the closed-loop system~\eqref{eq:interconnected_sys} to establish the stability of its equilibrium.
    More precisely, the analysis combines: (i) the stability properties of the boundary-layer system (cf. Lemma~\ref{lemma:bl}), (ii) those of the reduced system (cf. Assumption~\ref{ass:reduced}), and (iii) a careful treatment of the actual interconnected dynamics, rather than the ``ideal'' dynamics considered in these two auxiliary systems.
    Indeed, we recall that the boundary-layer system neglects the drift of its equilibrium, while, analogously, the reduced system assumes that the fast state lies at all times on the equilibrium manifold.
    Therefore, these two ingredients must be combined appropriately, together with a suitable tuning of $\pr$ and appropriate regularity assumptions on the dynamics and on the equilibrium map, in order to control the variation of $\xt$ and derive an upper bound $\bar{\pr}$ below which stability of the interconnected system is preserved.

\section{NUMERICAL SIMULATIONS}
\label{sec:simulations}

In this section, we present numerical simulations to validate the proposed \algo/ control strategy.
In particular, we consider the actuated pendulum introduced in Example~\ref{ex:actuated_pendulum}, whose physical parameter values are reported in Table~\ref{tab:physical_parameters}. 
All simulations were conducted using the CasADi tool for numerical optimization \cite{Andersson2019}.
\begin{table}[htpb]
    \renewcommand{\arraystretch}{1.1}
\caption{Physical Parameters of the Actuated Pendulum}
\label{tab:physical_parameters}
\begin{center}
\begin{tabular}{|c|c|}
\hline
\textbf{Parameter} & \textbf{Value} \\
\hline
$l$ & $1.0 [\text{m}]$\\
\hline
$m$ & $0.5 [\text{Kg}]$ \\
\hline
$\beta$ & $0.5 [\text{N m s/rad}]$  \\
\hline
$J$ & $0.5 [\text{Kg m}^2]$  \\
\hline
$K_t$ & $0.4 [\text{N m/A}]$ \\
\hline
$K_e$ & $0.4 [\text{V s/rad}]$ \\
\hline
$R$ & $0.6 [\Omega]$\\
\hline
$L$ & $\tilde{L} \pr~[\text{H}]$\\
\hline
$\tilde{L}$ & $1.0~[\text{H}]$\\
\hline
\end{tabular}
\end{center}
\end{table}
Consistently with the theoretical setting, the optimizer is only aware of the reduced-order model obtained by considering~\eqref{eq:pendulum_slow} in the case in which the armature current $\xit$ lies in its equilibrium manifold~\eqref{eq:xieq_specific}.
Namely, in this scenario, the reduced dynamics map~\eqref{eq:red} explicitly reads as
\begin{align*}
    \red(\x, \uu, \pr) \!:=\! 
    \begin{bmatrix}
        x_1 + \pr x_2 \\
        x_2 \!+\!\! \pr \left(
            -\frac{g}{l} \sin(x_1) \!-\! \frac{\beta R + K_t K_e}{m l^2 R}x_2 + \frac{K_t}{m l^2 R} \uu
        \right)
    \end{bmatrix}\!.
\end{align*}
In all simulations, we consider $\hor=4$ and cost functions and saturation constraints given by
\begin{align*}
    \cost(x,u) := x^\top Q x + 0.01 u^2, \quad
     \costf(x) := x^{\top} Q_f x
\end{align*}
with $Q = Q_f := \diag(100, 0.1)$, $R := 0.01$. The system is subject to the actuation constraints: $\uu \in \mathcal{U} := \{\uu \in \R \mid |\uu| \leq 12\}$.
The MPC problem is addressed suboptimally through a single step of a projected gradient method.
We compare the proposed approach, where the suboptimal MPC optimizer relies on a reduced-order prediction model while interacting with the actual full-order plant, against two benchmark settings. In these benchmarks, the MPC operates either optimally or suboptimally, but the plant is artificially forced to operate under an ideal steady-state regime for its fast dynamics, i.e., $\xit = \xieq(\xt, \ut)$ for all $t \in \mathbb{N}$. Because this idealized plant behavior perfectly aligns with the reduced-order prediction model, no structural plant-model mismatch exists for the optimizer in these benchmarks. To accurately reflect this condition, we refer to these benchmarks as \textit{Optimal (Ideal Plant)} and \textit{Suboptimal (Ideal Plant)}.
These strategies are compared in three scenarios in which different $\pr$ are used.
As predicted by Theorem~\ref{th:main}, Fig.~\ref{fig:PG_all_work} shows that the closed-loop system exhibits exponential convergence towards the desired equilibrium configuration when a sufficiently small $\pr = 0.01s$ is used. %
Fig.~\ref{fig:PG_only_SS_works} highlights the importance of timescale separation. 
Indeed, for a larger value of $\pr$ ($\pr = 0.12s$), the suboptimal MPC still provides acceptable closed-loop behavior when controlling the idealized matched plant but its performance degrades significantly when interacting with the actual full-order plant.
Finally, Fig.~\ref{fig:PG_only_SSopt_works} further confirms the need for timescale separation.
In this case, in which $\pr = 0.2s$, only the exact optimal MPC applied to the idealized matched plant guarantees good closed-loop performance. Conversely, the suboptimal numerical optimization leads to a severe performance degradation even in the absence of plant-model mismatch, a deterioration that is further exacerbated when controlling the full-order plant.
\begin{figure}[H]
    \centering
    \includegraphics[width=0.98\columnwidth]{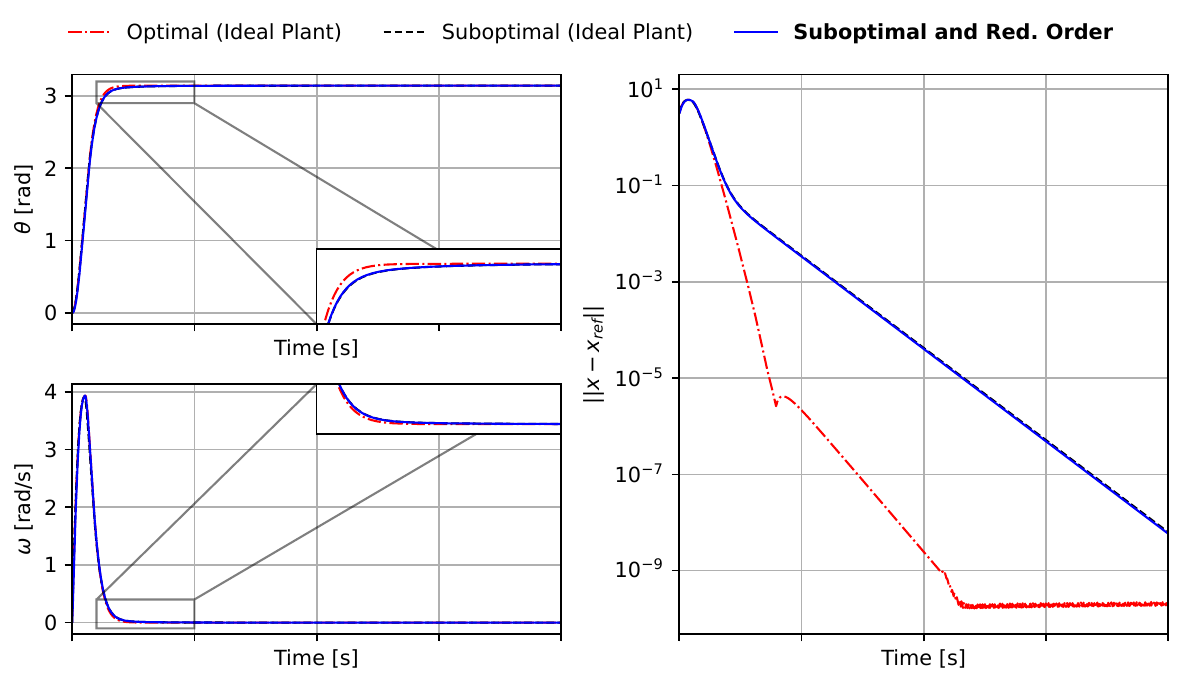}
    \caption{Closed-loop trajectories of the target states $\theta$ and $\omega$ (left) and state error in semilog scale (right) for $\pr = 0.01$ s.}
    \label{fig:PG_all_work}
\end{figure}

\begin{figure}[H]
    \centering
    \includegraphics[width=0.98\columnwidth]{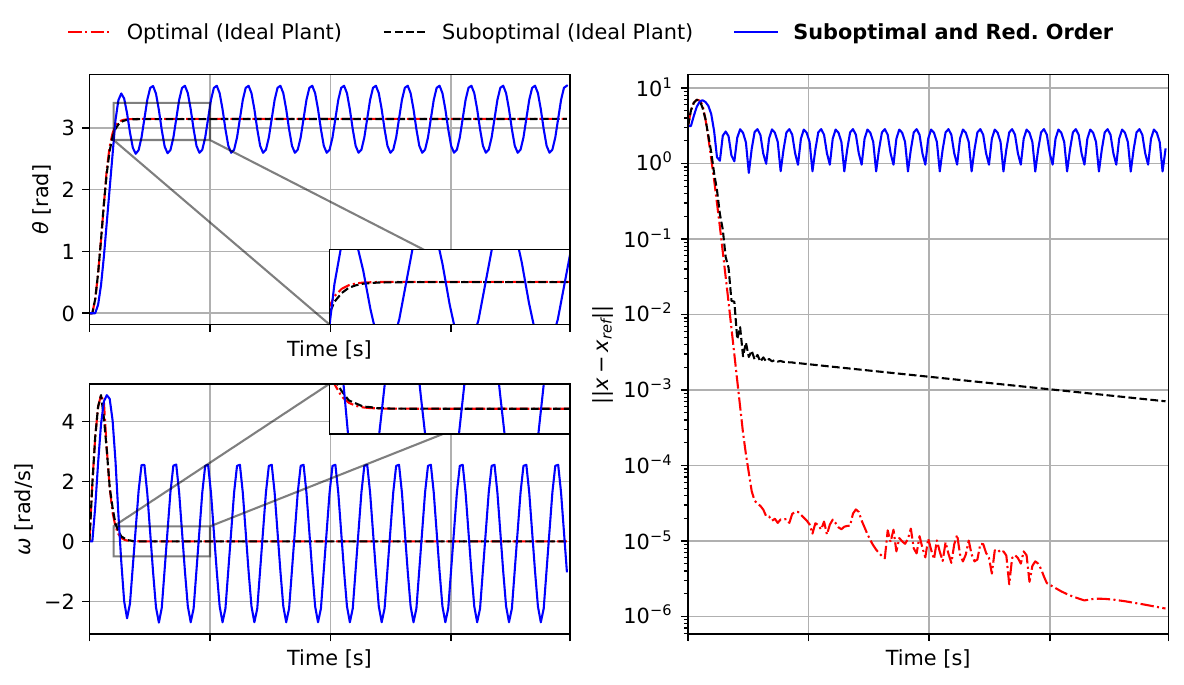}
    \caption{Closed-loop trajectories of the target states $\theta$ and $\omega$ (left) and state error in semilog scale (right) for $\pr = 0.12$ s.}
    \label{fig:PG_only_SS_works}
\end{figure}
\begin{figure}[H]
    \centering
    \includegraphics[width=0.98\columnwidth]{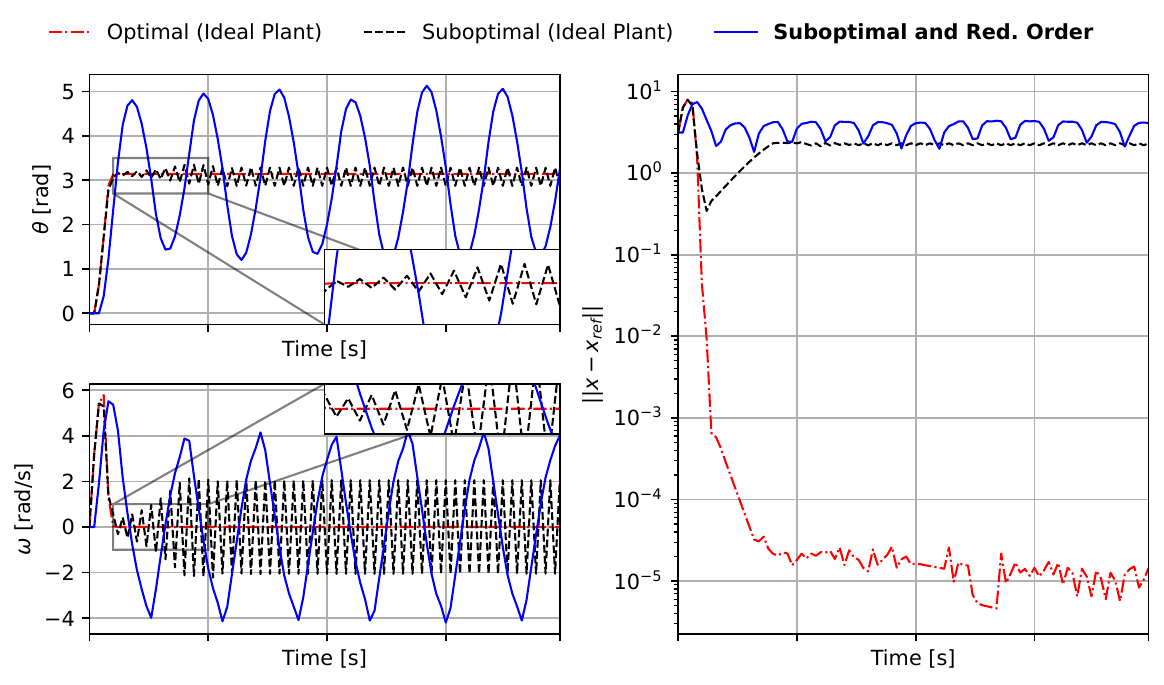}
    \caption{Closed-loop trajectories of the target states $\theta$ and $\omega$ (left) and state error in semilog scale (right) for $\pr = 0.2$ s.}
    \label{fig:PG_only_SSopt_works}
\end{figure}

\section{CONCLUSIONS}
\label{sec:conclusions}

We proposed a suboptimal and reduced-order MPC scheme for discrete-time interconnected systems. 
We relied on a reduced-order model that omits part of the plant dynamics, enabling a lighter computational burden while maintaining closed-loop stability guarantees. 
In detail, we showed that stability is preserved by properly tuning a timescale parameter. 
Numerical tests on an actuated pendulum confirmed the effectiveness of our approach.
A rigorous treatment of recursive feasibility is left for future work.

%
                                  %
                                  %
                                  %
                                  %
                                  %

%

%

%
%

%
%

%
%
%
%
%
%
%
%
%
%
%
%
%
%
%
%
%
%
%
%
%
%
%
%
%
%
%
%
%
%
%
%
%
%
%
%
%
%
%
%
%
%
%
%
%
%
%
%
%
%
%
%
%
%
%
%
%
%
%
%
%
%
%
%
%
%
%
	
%
%
%
%
%
%
%
%
%
%
%
%
%
%
%
%
%
%
%
%
%
%
%
%
%
%
%
%
%
%
%
%
%
%
%
%
%
%
%
%
%
%
%
%
%
%
%
%
%
%
%
%
%

%
%
%
%
%
%
%
%
%
%
%
%
%
%
%
%
%
%
%
%
%
%
%
%
%
%
%
%
%
%
%
%
%
%
%
%
%
%
%
%
%
%
%
%
%
%
%
%
%
%
%
%
%
%
%
%
%
%
%
%
%
%
%
%
%
%
%
%
%
%
%
%
%
%
%
%
%
%
%
%
%
%
%
%
%
%
%
%
%
%
%
%
%
%
%
%
%
%
%
%
%
%
%
%
%
%
%
%
%
%
%
%
%
%
%
%
%
%
%
%
%
%
%
%
%
%
%
%
%
%
%
%
%
%
%
%
%
%
%
%
%
%
%
%
%
%
%
%
%
%
%
%
%
%
%
%
%
%
%
%
%
%
%
%
%
%
%
%
%
%
%
%
%
%
%
%
%
%
%
%
%
%
%
%
%
%
%
%
%
%
%
%
%
%
%
%
%
%
%
%
%
%
%
%
%
%
%
%
%
%
%
%
%
%
%
%
%
%
%
%
%
%
%
%
%
%
%
%
%
%
%
%
%
%
%
%
%
%
%
%
%
%
%
%
%
%
%
%
%
%
%
%
%
%
%
%
%
%
%
%
%
%
%
%
%
%
%
%
%
%
%
%
%
%
%
%
%
%
%
%
%
%
%
%
%
%
%
%
%
%
%
%
%
%
%
%
%
%
%
%
%
%
%
%
%
%
%
%
%
%
%
%
%
%
%
%
%
%
%
%
%
%
%
%
%
%
%
%
%
%
%
%
%
%
%
%
%
%
%
%

%
%

%
%
%
%
%
%
%
%
%
%
%
%
%
%
%
%
%
%
%
%
%
%
%
%
%
%
%
%
%
%
%
%
%
%
%
%
%
%
%
%
%
%
%
%
%
%
%
%
%
%
%
%
%
%
%
%
%
%
%
%
%
%
%
%
%
%
%
%
%
%
%
%
%
%
%
%
%
%
%
%
%
%
%
%
%
%
%
%
%
%
%
%
%
%
%
%
%
%
%
%
%
%
%
%
%
%
%
%
%
%
%
%
%
%
%
%
%
%
%
%
%
%
%
%
%
%
%
%
%
%
%
%
%
%
%
%
%
%
%
%
%
%
%
%
%
%
%
%
%
%
%
%
%
%
%
%
%
%
%
%
%
%
%
%
%
%
%
%
%
%
%
%
%
%
%
%
%
%
%
%
%
%
%
%
%
%
%
%
%
%
%
%
%
%
%
%
%
%
%
%
%
%
%
%
%
%
%
%
%
%
%
%
%
%
%
%
%
%
%
%
%
%
%
%
%
%
%
%
%
%
%
%
%
%
%
%
%
%
%
%
%
%
%
%
%
%
%
%
%
%
%
%
%
%
%
%
%
%
%
%
%
%
%
%
%
%
%
%
%
%
%
%
%
%
%
%
%
%
%
%
%
%
%
%
%
%
%
%
%
%
%
%
%
%
%
%
%
%
%
%
%
%
%
%
%
%
%
%
%
%
%
%
%
%
%
%
%
%
%
%
%
%
%
%
%
%
%
%
%
%
%
%
%

%
%
%
%
%
%

%
%
%

%

%

\end{document}